\patchcmd{\@settitle}{\uppercasenonmath\@title}{}{}{}
\patchcmd{\@setauthors}{\MakeUppercase}{\scshape}{}{}
\patchcmd{\section}{\scshape}{\bfseries}{}{}
\renewcommand{\@secnumfont}{\bfseries}
\patchcmd{\abstract}{\scshape\abstractname}{\textbf{\abstractname}}{}{}
\newtheorem{thm}{Theorem}[section]
\newtheorem{prop}[thm]{Proposition}
\newtheorem{lem}[thm]{Lemma}
\newtheorem{cor}[thm]{Corollary}
\theoremstyle{definition}
\newtheorem{definition}[thm]{Definition}
\newtheorem{example}[thm]{Example}
\newtheorem*{claim}{Claim}
\theoremstyle{remark}
\newtheorem{remark}[thm]{Remark}
\numberwithin{equation}{section}
\newcommand{\bdef}{\begin{definition}}\newcommand{\ndef}{\end{definition}}
\newcommand{\bteo}{\begin{thm}}\newcommand{\nteo}{\end{thm}}
\newcommand{\bprop}{\begin{prop}}\newcommand{\nprop}{\end{prop}}
\newcommand{\brmk}{\begin{remark}}\newcommand{\nrmk}{\end{remark}}
\newcommand{\bcor}{\begin{cor}}\newcommand{\ncor}{\end{cor}}
\newcommand{\blem}{\begin{lem}}\newcommand{\nlem}{\end{lem}}
\newcommand{\bex}{\begin{example}}\newcommand{\nex}{\end{example}}
 \newcommand{\Z}{\mathbb{Z}}
\renewcommand{\P}{\mathbb{P}}
\renewcommand{\O}{\mathcal{O}}
\newcommand{\g}{\mathcal}
\newcommand{\qt}[1]{``#1''}
\DeclareMathOperator{\Pic}{Pic}
\DeclareMathOperator{\gon}{gon}
\DeclareMathOperator{\gengon}{gengon}
\DeclareMathOperator{\Cliff}{Cliff}
\begin{document}


\title{Special divisors on curves on K3 surfaces carrying an Enriques involution}
\author{Marco Ramponi}
\address{Laboratoire de Math\'{e}matiques et Applications, Universit\'{e} de Poitiers, F-86962 Poitiers, France}
\email{marco.ramponi@math.univ-poitiers.fr}


\begin{abstract}
We study the pencils of minimal degree on the smooth curves lying on a K3 surface $X$ which carries a fixed-point free involution. Generically, the gonality of these curves is totally governed by the genus 1 fibrations of $X$.

\bigskip
\noindent \textbf{Mathematics Subject Classification.} 14H51, 14C20, 14J28

\bigskip
\noindent \textbf{Keywords.} K3 surfaces, Brill-Noether theory
\end{abstract}

\maketitle

\section{Introduction}
The \emph{gonality} and the \emph{Clifford index} of a smooth algebraic curve $C$ of genus $g\geq2$ are two natural numerical invariants which, roughly speaking, measure the \qt{speciality} of the point $[C]\in\g{M}_g$. (For more details, see Section \ref{gon-and-Cliff} below).

In general, it is very difficult to determine them for a given curve, except in very special cases. Much work has been done regarding the behaviour of these invariants for curves lying on some special classes of surfaces. For example, for a curve $C$ lying on a smooth projective surface, an interesting question is to ask whether its gonality, or the Clifford index, varies when $C$ moves in its linear system. A classical result in this direction, essential in this work, is the theorem of Green and Lazarsfeld \cite{GL}, which states that, when $C$ lies on a K3 surface, 
\begin{enumerate}[(i)]
\item The Clifford index is constant among smooth curves in the linear system $|C|$.
\item If $\Cliff(C)<\lfloor\frac{g-1}{2}\rfloor$, then there exists a divisor $D$ on the K3 surface such that $A=\O_C(D)$ appears in the definition of $\Cliff(C)$ and attains the minimum.
\end{enumerate}

The possibility of an explicit computation of the Clifford index and the gonality of curves lying on particular K3 surfaces is an interesting and challenging question. This highly depends on the geometry of the given ambient surface. We are thus interested to study this problem for some special classes of K3 surfaces carrying rich structure, such as automorphisms. At least, we want to consider surfaces with Picard number $\rho\geq2$, for if the Clifford index of a curve on a K3 surface $X$ with cyclic Picard group is cut out by a divisor $D$, then a simple computation shows that $D$, up to linear equivalence, coincides with the ample generator of $\Pic(X)$. This situation is not very interesting.

In this note, we consider a pair $(X,\vartheta)$ where $X$ is a K3 surface and $\vartheta$ is a fixed-point-free involution. It is well-known \cite[VIII.19]{BHPV} that there is a $10$-dimensional moduli space of such pairs and the generic pair is such that there are no smooth rational curves lying on $X$. Our main result is the following.

\bteo\label{main} Let $(X,\vartheta)$ be a generic K3 surface $X$ with an Enriques involution $\vartheta$. The gonality of any smooth curve $C$ on $X$, with $C^2>0$, is cut out by any elliptic curve $E$ on $X$ having minimal intersection with $C$, i.e. $\gon(C) = E\cdot C$.
\nteo

\brmk If $X$ is as in Theorem \ref{main}, by the results of \cite{Kn09} and the fact that there are no classes of square $\pm 2$ on $X$ (see (\ref{lattice}) below), any smooth curve on $X$ is such that $\Cliff(C)=\gon(C)-2$. Therefore, Theorem \ref{main} automatically contains a statement about the Clifford index of $C$, and the elliptic curve $E$ has the role of $D$ in part (ii) of the result of Green and Lazarsfeld stated above.
\nrmk

\brmk We point out that a result of a similar flavor is given in the pioneer work of Reid \cite{Reid}. Consider a complete base-point free pencil of degree $d$ on a curve $C$ lying on a K3 surface. The inequality
$$\frac{1}{4}d^2 + d + 2< g(C)$$
is then a sufficient condition for the pencil to be induced by an elliptic pencil on the surface. Of course, even when $d=\gon(C)$, this condition fails for many curves on the K3 surfaces considered in Theorem \ref{main} -- e.g., whenever $C$ is the pull-back of a curve on the quotient Enriques surface having maximal gonality (these curves always exist, as we now recall).
\nrmk

As well as \cite{GL}, the second fundamental result for the proof of Theorem \ref{main} is the work of Knutsen and Lopez \cite{KL1} where the authors carry out a detailed analysis of the gonality of curves on an Enriques surface. We have \cite[Theorem~1.3]{KL1}, for a curve $C$ with $C^2>0$ on an Enriques surface $Y$,
\begin{align}\label{gengon}
\gengon|C| = \min\{ 2\phi(C), \mu(C), \lfloor\frac{C^2}{4}\rfloor + 2\},
\end{align}
where
\begin{align*}
\phi(C) &= \min\{\lvert F\cdot C\rvert \, \colon \, F\in\Pic(Y), F^2=0, F\not\equiv0 \}, \\
\mu(C) &= \min\{B\cdot C -2 \, \colon \, F\in\Pic(Y), B>0, B^2=4, B\not\equiv C \}
\end{align*}
and $\gengon|C|$ denotes the gonality of the \emph{general} curve in $|C|$. Indeed, it may well happen that there exist linear subsystems of $|C|$ whose smooth curves have lower gonality than $\gengon|C|$ (cf.~\cite[Corollary 1.6]{KL1}). 

In light of this and of the trichotomy expressed by (\ref{gengon}), the content of Theorem \ref{main} might be somewhat surprising at a first glance, because the K3 surface $X$ contains in particular the curves which are  pulled back from the Enriques surface $Y=X/\langle\vartheta\rangle$, for which (\ref{gengon}) holds. The point is that, while the gonality of a curve on an Enriques surface might well be lower than the minimal degree induced by genus 1 pencils of the surface, we still have the following condition:
\begin{align}\label{inequality-phi}
2\phi(C)\leq \gengon|C| +2
\end{align}
(cf.\cite[Corollary 1.5]{KL1}). In section \ref{section-proof} below, we prove Theorem \ref{main} by showing how the condition (\ref{inequality-phi}) is essentially enough to deduce that the gonality of all curves on the relative K3 cover $X$ is governed by the elliptic pencils pulled back on $X$ from its Enriques quotient.

Throughout the paper we work over the field of complex numbers. 

\subsection*{Acknowledgements}
Thanks to Alessandra Sarti whose remarks and suggestions have considerably improved the exposition of this paper. I wish to thank Andreas Leopold Knutsen for useful conversations under the shade of an oak in the university park of Bergen. Thanks to an anonymous referee for a suggestion which simplified and shortened an argument in the proof.

\section{Preliminaries on gonality and Clifford index}\label{gon-and-Cliff}

For all basic results and implications coming from Brill-Noether theory, in this section we refer the reader to \cite{ACGH}.

Let $C$ be a smooth algebraic curve of genus $g\geq2$. The \emph{gonality} of $C$ is defined as the integer
$$\gon(C) = \min\{ \deg(A) \, \colon \, A\in\Pic(C), \, h^0(A)=2\}. $$
In particular, $\gon(C)=2$ if and only if $C$ is hyperelliptic. By Brill-Noether theory,
\begin{align}\label{maximal-gonality}
\gon(C)\leq \lfloor \frac{g+3}{2} \rfloor,
\end{align}
with an equality for the general curve in $\g{M}_g$. We refer to $\lfloor\tfrac{g+3}{2}\rfloor$ as the \emph{maximal gonality} of a curve of genus $g$.

For a line bundle $A$ on $C$, one defines $\Cliff(A)=\deg A-2h^0(A)+2$. The \emph{Clifford index} of the curve $C$ itself is then defined as
$$\Cliff(C) = \min\{\Cliff(A) \, \colon \, A\in\Pic(C), \, h^0(A)\geq2, \, h^1(A)\geq2\}. $$

Note that, by Brill-Noether theory, the line bundles appearing in the definition of $\Cliff(C)$ always exist for $g\geq4$. Thus, when $g=2,3$ we adopt the standard convention that $\Cliff(C)=0$ when $C$ is hyperelliptic and $\Cliff(C)=1$ otherwise.

By Clifford's theorem, we have $\Cliff(C)\geq0$ with equality if and only if $C$ is hyperelliptic. Since $\Cliff(C)\leq\gon(C)-2$, we have
$$\Cliff(C)\leq \lfloor \frac{g-1}{2} \rfloor,$$
and the equality holds for the general curve in $\g{M}_g$. 

Gonality and Clifford index are indeed very much related: for any curve $C$ of Clifford index $c$ and gonality $k$, one has \cite{CM91}
\[c+2\leq k \leq c+3,\]
and curves for which $k=c+3$ are conjectured to be very rare \cite{ELMS}.

As was recalled in the Introduction, if $C$ lies on a K3 surface $X$ and has non-maximal Clifford index, then by \cite{GL}, there exists a line bundle $M$ on the surface such that $\Cliff(C)=\Cliff(M\otimes\O_C)$. By \cite[Lemma 8.3]{Kn01}, building on \cite{Ma}, one can choose $M$ such that $h^0(M\otimes\O_C)=h^0(M)$ and $h^1(M)=0$, whence
\begin{align}\label{cliff-explicit}
\Cliff(C)=C\cdot M - M^2-2.
\end{align}
Moreover,
\begin{align}\label{M-irreducible}
M \mbox{ is represented by an irreducible curve,}
\end{align}
and
\begin{align}\label{M-inequality-Knutsen}
2M^2\leq M\cdot C, \, \mbox{ with equality only if } C\sim 2M.
\end{align}

\section{Proof of Theorem \ref{main}}\label{section-proof}
Let $(X,\vartheta)$ be a pair consisting of a K3 surface $X$ together with a fixed-point free \emph{involution} (i.e. an order 2 automorphism) $\vartheta$ of $X$. We denote by  
$$Y=X/\langle\vartheta\rangle$$
the quotient surface, which we call an \emph{Enriques surface}. 

We let $\pi\colon X\to Y$ denote the natural projection and by 
$$\pi^\ast\colon H^2(Y,\Z)\to H^2(X,\Z); \qquad \pi_\ast\colon H^2(X,\Z)\to H^2(Y,\Z)$$
the natural induced maps, satisfying
$$\pi_\ast\pi^\ast(y)=2y; \quad \pi^\ast\pi_\ast(x)=x+\vartheta^\ast(x); \quad (\pi^\ast y_1,\pi^\ast y_2) = 2(y_1,y_2). $$

If we let $H^2(X,\Z)^\vartheta$ be the set of classes in $H^2(X,\Z)$ which are fixed by $\vartheta$, the above properties easily imply that the restriction of $\pi_\ast$ to $H^2(X,\Z)^\vartheta$ is an isomorphism onto its image which multiplies the intersection form by $2$. That is,
\begin{align}\label{intersection-times-two}
\pi_\ast(H^2(X,\Z)^\vartheta)\simeq H^2(X,\Z)^\vartheta(2)
\end{align}

We recall that $\vartheta$ is a \emph{non-symplectic} involution, in the sense that it acts by multiplication by $-1$ on $H^{2,0}(X)$. This, together with the fact that the action of $\vartheta$ on $H^2(X,\Z)$ preserves the intersection form, yields
$$H^2(X,\Z)^{\vartheta} \subset H^2(X,\Z) \cap H^{1,1}(X).$$

By the Lefschetz theorem on $(1,1)$-classes, we identify the member on the right hand side of the above equation with $\Pic(X)$, the Picard group of $X$. 

As shown in \cite{DK}, one can construct a 10-dimensional period domain $\g{D}$ for the pairs $(X,\vartheta)$, and for the generic such pair in $\g{D}$ one has the equality
\begin{align}\label{generic-assumption}
H^2(X,\Z)^\vartheta = \Pic(X).
\end{align}

From now on, we assume $(X,\vartheta)$ to satisfy condition (\ref{generic-assumption}) above. This is our genericity assumption in the statement of Theorem \ref{main}.

Let $L$ be a big and nef line bundle on $X$. We deduce two immediate consequences of (\ref{generic-assumption}); one purely numerical and a second one more geometric in nature. 

Firstly,  (\ref{generic-assumption}), together with (\ref{intersection-times-two}), yield
\begin{align}\label{lattice}
L^2 \equiv 0 \mod 4.
\end{align}
In particular, $X$ contains no classes of self-intersection $\pm 2$. By \cite{Kn09} this implies that the gonality of smooth curves $\Sigma$ in $|L|$ is constant and 
$$\Cliff(\Sigma)=\gon(\Sigma)-2.$$ 
Thus, whenever $\Cliff(\Sigma)$ is computed by the restriction of a divisor $D$ on the surface, since $\deg_\Sigma(D)=D\cdot\Sigma\in2\Z$ (again by (\ref{intersection-times-two})), we see that both the Clifford index and the gonality of $\Sigma$ must be even.

Secondly, (\ref{generic-assumption}) implies that $\vartheta$ acts as an involution on $|L|\simeq\P^g$. This lifts to an involution 
$$\vartheta^\ast\colon H^0(X,L)\to H^0(X,L),$$ 
at the level of sections. Let us denote by $V_\pm\subset H^0(X,L)$ the eigenspaces relative to the eigenvalues $\pm 1$ for this action. The sections in $V_+$ and $V_-$ yield the effective divisors in $|L|$ which are mapped to themselves by $\vartheta$. With respect to the covering $\pi\colon X\to Y$, these divisors map $2$ to $1$ onto divisors on the Enriques quotient. In other words, we may choose an effective divisor $C\subset Y$ such that $\pi^\ast C$ belongs to $|L|$ and the linear subspace $\P_+=\P(V_+)$, as a subsystem of $|L|$, corresponds to $\pi^\ast|C|$. (With respect to this choice, $\P_-=\P(V_-)$ corresponds then to $\pi^\ast|C+K_Y|$). 

As $L^2>0$ by assumption, we have $C^2>0$, hence the general member of $|C|$ is a smooth irreducible curve. In fact, if $|C|$ is hyperelliptic then its general member is a smooth (hyperelliptic) curve by \cite[Corollary 4.5.1 p.\,248]{CD1}. Else, $|C|$ is basepoint free \cite[Proposition 4.5.1]{CD1} and we apply Bertini's theorem. We therefore assume $C$ itself to be a smooth irreducible curve. 

Moreover, we choose $C$ to be general in its linear system, so that, following \cite{KL1}, the gonality of $C$ is equal to the \emph{general gonality} (i.e.\,the greatest gonality among the smooth curves in $|C|$)
$$\gon(C)=\gengon|C|.$$

Let $\widetilde{C}:=\pi^\ast C$. It is well-known that the restriction of the canonical bundle $K_Y$ to $C$ is non-trivial. It follows that $\widetilde{C}$ is a smooth irreducible curve of genus $g$ in $|L|$ and the restriction of the covering map
$$\pi|_{\widetilde{C}}\colon \widetilde{C} \to C$$ 
exhibits $\widetilde{C}$ as an unramified double covering of $C$. In particular, by push-forward of a pencil of minimal degree on $\widetilde{C}$, or by pull-back of gonality pencils from $C$,
\begin{align}\label{inequality-double-cover}
\gon(C)\leq\gon(\widetilde{C})\leq 2\gon(C).
\end{align}

Let now $|2F|$ be a genus 1 pencil on the Enriques surface $Y$ such that 
$$\phi(C)=F\cdot C.$$
We set $\widetilde{F}=\pi^\ast F$ and by (\ref{inequality-phi}) we obtain the following inequality
\begin{align}\label{inequality-gon}
\gon(\widetilde{C}) \leq \widetilde{F}\cdot\widetilde{C}  = 2\phi(C) \leq \gon(C)+2
\end{align}

We claim that the first inequality is, in fact, always an equality. 

Indeed, assume by contradiction $\gon(\widetilde{C})< \widetilde{F}\cdot\widetilde{C}$. By (\ref{inequality-double-cover}),
\[\gon(C)<2\phi(C).\]

Applying \cite[Corollary 1.5]{KL1}, we have
\begin{align}\label{possible-cases}
C^2\geq 10 \ \mbox{ or }  (C^2,\phi(C))=(6,2) \ \mbox{ or } (C^2,\phi(C))=(4,2).
\end{align}

\begin{claim}
$\gon(\widetilde{C}) < \lfloor\frac{g(\widetilde{C})+3}{2}\rfloor$ (in particular, $\gon(\widetilde{C})$ is even).
\end{claim}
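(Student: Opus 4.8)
The plan is to bound $\gon(\widetilde{C})$ from above by the degree of the pencil induced on $\widetilde{C}$ by the pulled-back elliptic pencil, namely $\widetilde{F}\cdot\widetilde{C}=2\phi(C)$, and to compare this with the maximal gonality $\lfloor\tfrac{g(\widetilde{C})+3}{2}\rfloor$. First I would nail down the numerics. From $\widetilde{C}=\pi^\ast C$ and $(\pi^\ast C)^2=2C^2$, the genericity assumption (\ref{lattice}) forces $C^2$ to be even; and since $\widetilde{C}$ is a smooth genus-$g(\widetilde{C})$ curve on the K3 surface $X$, we have $2g(\widetilde{C})-2=\widetilde{C}^2=2C^2$, i.e. $g(\widetilde{C})=C^2+1$. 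Hence
\[
\Big\lfloor \frac{g(\widetilde{C})+3}{2}\Big\rfloor=\Big\lfloor\frac{C^2+4}{2}\Big\rfloor=\frac{C^2}{2}+2,
\]
using that $C^2$ is even, so it suffices to establish the strict inequality $\gon(\widetilde{C})<\frac{C^2}{2}+2$.

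The key input would be Cossec's inequality $\phi(C)^2\leq C^2$, valid for any nef class with positive self-intersection on an Enriques surface (see \cite{CD1}); here $C$ is a smooth irreducible curve with $C^2>0$, hence nef. Since $(\phi(C)-2)^2\geq0$ gives $\phi(C)^2\geq 4\phi(C)-4$, combining the two inequalities yields $C^2\geq\phi(C)^2\geq 4\phi(C)-4$, equivalently
\[
2\phi(C)\leq \frac{C^2}{2}+2.
\]
Recalling from (\ref{inequality-gon}) that $\widetilde{F}\cdot\widetilde{C}=2\phi(C)$, and invoking the contradiction hypothesis $\gon(\widetilde{C})<\widetilde{F}\cdot\widetilde{C}$, I would conclude
\[
\gon(\widetilde{C})<2\phi(C)\leq\frac{C^2}{2}+2=\Big\lfloor\frac{g(\widetilde{C})+3}{2}\Big\rfloor,
\]
which is the first assertion of the Claim. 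I note that this reasoning does not in fact require the trichotomy (\ref{possible-cases}); alternatively one could simply dispatch its three cases, the two small ones $(C^2,\phi(C))=(4,2),(6,2)$ being immediate and the range $C^2\geq10$ covered by the same estimate.

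For the parenthetical statement, once the gonality is known to be non-maximal the evenness is essentially free. Non-maximality is equivalent, through $\Cliff(\widetilde{C})=\gon(\widetilde{C})-2$, to $\Cliff(\widetilde{C})<\lfloor\tfrac{g(\widetilde{C})-1}{2}\rfloor$, so by Green and Lazarsfeld \cite{GL} the Clifford index of $\widetilde{C}$ is cut out by a line bundle on $X$. The evenness of $\Cliff(\widetilde{C})$, and hence of $\gon(\widetilde{C})=\Cliff(\widetilde{C})+2$, then follows exactly as explained after (\ref{lattice}), since all intersection numbers on $\Pic(X)=H^2(X,\Z)^\vartheta$ lie in $2\Z$ by (\ref{intersection-times-two}).

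The argument is short once the numerics are in place, and I expect the only delicate point to be the boundary of the estimate $2\phi(C)\leq\frac{C^2}{2}+2$: equality does occur (e.g. for $(C^2,\phi(C))=(4,2)$, where $2\phi(C)$ already equals the maximal gonality), so strict non-maximality must be extracted from the strict contradiction hypothesis rather than from the estimate alone.
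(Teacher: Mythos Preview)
Your argument is correct and is a genuinely different route from the paper's. The paper splits into the cases $C^2=4$ and $C^2\geq 6$; in the first it observes that maximal gonality would force $\gon(\widetilde{C})=4=2\phi(C)$, against the contradiction hypothesis, while in the second it runs the chain
\[
\gon(\widetilde{C})\leq \widetilde{F}\cdot\widetilde{C}-1\leq \gon(C)+1\leq \Big\lfloor\tfrac{g(C)+3}{2}\Big\rfloor+1=\Big\lfloor\tfrac{g(\widetilde{C})+11}{4}\Big\rfloor<\Big\lfloor\tfrac{g(\widetilde{C})+3}{2}\Big\rfloor,
\]
using the Brill--Noether bound on $C$ and the Riemann--Hurwitz relation $g(C)=\tfrac{g(\widetilde{C})+1}{2}$, the last step requiring $g(\widetilde{C})\geq 7$. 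By contrast, you bypass both the case split and the gonality bound on $C$ by bringing in Cossec's inequality $\phi(C)^2\leq C^2$, which with the trivial estimate $(\phi(C)-2)^2\geq 0$ gives $2\phi(C)\leq\tfrac{C^2}{2}+2$ uniformly. This is shorter and more conceptual; the price is importing an extra (though entirely standard) fact about Enriques surfaces that the paper does not otherwise need. Your parenthetical deduction of evenness via Green--Lazarsfeld and the discussion after (\ref{lattice}) matches the paper's reasoning exactly. One small cosmetic point: $C^2$ is already even because the numerical lattice of an Enriques surface is even, so you need not invoke (\ref{lattice}) for that.
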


\begin{proof}
If $C^2=4$, then $\widetilde{C}^2=8$ and $g(\widetilde{C})=5$, so if equality holds, then it must be $\gon(\widetilde{C})=4=2\phi(C)=\widetilde{F}\cdot \widetilde{C}$, a contradiction.

If $C^2\geq6$, then $\widetilde{C}^2\geq12$, so that $g(\widetilde{C})\geq7$. Hence
\begin{align*}
\gon(\widetilde{C}) & \leq \widetilde{F}\cdot \widetilde{C} -1 \\
& \leq \gon(C)+1 \\
& \leq \lfloor\frac{g(C)+3}{2}\rfloor +1 \\
&  = \lfloor\frac{\frac{g(\widetilde{C})+1}{2}+3}{2}\rfloor +1 \\
& = \lfloor\frac{g(\widetilde{C})+11}{4}\rfloor \\
& < \lfloor\frac{g(\widetilde{C})+3}{2}\rfloor
\end{align*}
where the last inequality uses $g(\widetilde{C})\geq7$.
\end{proof}

Since, by our assumptions, $\gon(C)<2\phi(C)$, we proceed as follows.

If $\gon(C)=2\phi(C)-1$, then $\gon(C)\leq\gon(\widetilde{C})<2\phi(C)$ is incompatible with the parity of $\gon(\widetilde{C})$, whence yielding a contradiction.

By (\ref{inequality-phi}), we may therefore assume 
$$\gon(C)=2\phi(C)-2.$$ 
Then, necessarily $\gon(C)=\gon(\widetilde{C})$. 

We pick a line bundle $M$ on the K3 surface $X$, as in (\ref{cliff-explicit}), i.e.\,such that
$$\Cliff(\widetilde{C})=\Cliff(M\otimes\O_{\widetilde{C}})=M\cdot \widetilde{C} - M^2 -2.$$

If $M^2=0$, then it follows by (\ref{M-irreducible}) that $M$ is represented by an elliptic curve $E$. By construction, the elliptic curve $\widetilde{F}$ has minimal intersection with $\widetilde{C}$ among all elliptic curves on $X$, whence $\gon(\widetilde{C})=\Cliff(\widetilde{C})-2=E\cdot\widetilde{C}=\widetilde{F}\cdot \widetilde{C}$, a contradiction.

We may therefore assume $M^2>0$. Then $M^2\geq4$ by (\ref{lattice}). By (\ref{possible-cases}) and (\ref{cliff-explicit})
\[ 4\leq M^2 \leq M\cdot \widetilde{C} - M^2 = \Cliff(\widetilde{C}) +2 = \gon(\widetilde{C}). \]

Assume $\gon(\widetilde{C})=4$. Then $M^2=4$ and $M\cdot\widetilde{C} =8$, so that (\ref{M-inequality-Knutsen}) gives $\widetilde{C}\sim2M$, whence $\widetilde{C}^2=16$. It follows that $C^2=8$. This contradicts (\ref{possible-cases}) and we may therefore assume 
\[ \gon(\widetilde{C})>4. \]

Arguing as above, $\vartheta$ acts as an involution on $|M|$, and we get subsystems $\P_\pm$ of $|M|$, corresponding to $\pi^\ast|D|$ and $\pi^\ast|K_Y+D|$, where $D$ is some effective divisor on $Y$, with $\pi^\ast D\sim M$. Since $M^2>0$, we have $D^2>0$, whence
$$h^0(D)\geq2.$$

We have $\pi^\ast(C-D) \sim \widetilde{C}-M$, whence $(C-D)^2>0$. Also, 
$$2(C-D)\cdot C=\pi^\ast(C-D)\cdot\pi^\ast C=N\cdot\widetilde{C} = M\cdot N+N^2 >0,$$ 
so that by Riemann-Roch, $h^0(C-D)\geq2$ and, similarly, $h^0(C-D+K_Y)\geq2$. Therefore, $C\cdot(C-D)\geq2$ by the Hodge index theorem, so that
\begin{align*}
C^2 & = (D + C - D)^2 \\
& = D^2 +(C-D)^2 +2C\cdot (C -D) \\
& \geq 2 +2 +2 \\
& =6
\end{align*}

We may now apply \cite[Lemma 2.3]{KL2} and obtain
\[ \Cliff(C) \leq D\cdot C -D^2. \]

By \cite[Theorem 1.1]{KL2}, unless $C$ is a smooth plane quintic (which has gonality $4$), one has $\Cliff(C)=\gon(C)-2$, and so the above inequality yields
\begin{align}\label{estimate1}
2(D\cdot C-D^2) \geq 2\gon(C)-4.
\end{align}

On the other hand, $\gon(\widetilde{C})= \Cliff(\widetilde{C})+2 =\widetilde{C}\cdot M-M^2$, thus
\begin{align}\label{estimate2}
2(D\cdot C - D^2) =\widetilde{C}\cdot M-M^2=\gon(\widetilde{C}).
\end{align}
Since $\gon(C)=\gon(\widetilde{C})>4$, the equations (\ref{estimate1}) and (\ref{estimate2}) are incompatible.  Hence, our assumption that $\gon(\widetilde{C})<\widetilde{F}\cdot \widetilde{C}$ has led to a contradiction and we conclude
$$\gon(\widetilde{C})=\widetilde{F}\cdot\widetilde{C}.$$

As we have already observed above, thanks to our genericity assumption on $X$, this holds true for all smooth curves $\Sigma$ in $|\widetilde{C}|=|L|$. This concludes the proof of Theorem \ref{main}, \ q.e.d.

\bibliography{bigbib}
\bibliographystyle{plain}

\end{document}